\renewcommand\eqref[1]{(\ref{#1})} 
\title[Global existence and blow-up of solutions]{Global existence and blow-up of solutions to porous medium equation for Baouendi-Grushin operator}
\author[A. Dukenbayeva]{Aishabibi Dukenbayeva}
\address{
  Aishabibi Dukenbayeva:
    \endgraf
  Suleyman Demirel University
  \endgraf
  Kaskelen, Kazakhstan
  \endgraf
  and
  \endgraf
  Institute of Mathematics and Mathematical Modeling
  \endgraf
 Almaty, Kazakhstan
  \endgraf
 {\it E-mail address} {\rm aishabibi.dukenbayeva@gmail.com}
  }
\subjclass{35K65, 35K91, 35B44, 35A01.}
\keywords{Blow-up, porous medium equation, global solution, Baouendi-Grushin operator}
\thanks{This research is funded by the Committee of Science of the Ministry of Science and Higher Education of the Republic of Kazakhstan (Grant No. AP14972714).}
\newtheoremstyle{theorem}
{10pt}          
{10pt}  
{\sl}  
{\parindent}     
{\bf}  
{. }    
{ }    
{}     
\theoremstyle{theorem}
\numberwithin{equation}{section}
\theoremstyle{plain}
\newtheorem{thm}{Theorem}[section]
\newtheorem{lem}[thm]{Lemma}
\theoremstyle{definition}
\newtheorem{rem}[thm]{Remark}
\newtheoremstyle{defi}
{10pt}          
{10pt}  
{\rm}  
{\parindent}     
{\bf}  
{. }    
{ }    
{}     
\theoremstyle{defi}
\begin{document}
	\begin{abstract}
		In this note, we show a global existence and blow-up of the positive solutions to the initial-boundary value problem of the nonlinear porous medium equation related to Baouendi-Grushin operator. Our approach is based on the concavity argument and the Poincar\'e inequality for Baouendi-Grushin vector fields from \cite{SY20}, inspired by the recent works \cite{RST23_strat} and \cite{ST-21}.
	\end{abstract}
	\maketitle
	\section{Introduction}
	Let $z:=(x,y):=(x_{1},...,x_{m}, y_{1},...,y_{k})\in \mathbb{R}^{m}\times\mathbb{R}^{k}$ with $m,k\geq1$ and $m+k=n$. Consider the vector fields
$$X_{i}=\frac{\partial}{\partial x_{i}}, \;i=1,...,m, \;\;\; Y_{j}=|x|^{\gamma}\frac{\partial}{\partial y_{j}},\;\gamma\geq0,\;j=1,...,k.$$
Then the corresponding sub-elliptic gradient and Baouendi-Grushin operator on $\mathbb{R}^{m+k}$ are defined by
\begin{equation}\label{subgrad}
\nabla_{\gamma}:=(X_{1},...,X_{m}, Y_{1},...,Y_{k})=(\nabla_{x}, |x|^{\gamma}\nabla_{y}).
\end{equation}
and
\begin{equation}\label{Grush_op}
\Delta_{\gamma}=\sum_{i=1}^{m}X_{i}^{2}+\sum_{j=1}^{k}Y_{j}^{2}=\Delta_{x}+|x|^{2\gamma}\Delta_{y}=\nabla_{\gamma}\cdot \nabla_{\gamma},
\end{equation}
respectively, where $\Delta_{x}$ and $\Delta_{y}$ stand for the standard Laplacians in the variables $x\in \mathbb{R}^{m}$ and $y\in \mathbb{R}^{k}$.

Let $D \subset \mathbb{R}^{m+k}$ be a bounded domain (open and connected) supporting the divergence formula and $D \backslash \{(x, y) \in \overline{D}: x=0\}$ consists of only one connected component. In this note, we study the global existence and blow-up of the positive solutions to the initial-boundary problem of the nonlinear porous medium equation related to the Baouendi-Grushin operator
	\begin{align}\label{main_eqn_p>2}
	\begin{cases}
	u_t -\Delta_{\gamma} (u^\ell) = f(u), \,\,\, & (x,y) \in D,\,\, t>0, \\ 
	u(x,y,t)  =0,  \,\,\,& (x,y)\in \partial D,\,\, t>0, \\
	u(x,y,0)  = u_0(x,y)\geq 0,\,\,\, & (x,y) \in \overline{D}, 
	\end{cases}
	\end{align}
	where $\ell \geq 1$, $f$ is locally Lipschitz continuous on $\mathbb{R}$, $f(0)=0$, and such that $f(u)>0$ for $u>0$. Furthermore, we suppose that $u_0$ is a non-negative and non-trivial function in $C^1(\overline{D})$ with $u_0(x,y)=0$ on the boundary $\partial D$. 
	
Recall that for an even positive integer $\gamma$ the Baouendi-Grushin operator can be represented by a sum of squares of smooth vector fields satisfying the H\"{o}rmander rank condition
$${\rm rank} \;{\rm Lie} [X_{1},...,X_{m}, Y_{1},...,Y_{k}]=n.$$
The anisotropic dilation attached to $\Delta_{\gamma}$ on $\mathbb{R}^{m+k}$ and the homogeneous dimension are defined by
$$\delta_{\lambda}(z)=(\lambda x, \lambda^{1+\gamma} y), \quad \lambda>0$$
and
\begin{equation}\label{hom_dim}
Q=m+(1+\gamma)k,
\end{equation}
respectively. Recall also that a change of variables formula for the Lebesgue measure gives that
$$d\circ\delta_{\lambda}(x,y)=\lambda^{Q}dxdy.$$

We will also use the Sobolev space $H_{0}^{1, \gamma}(D)$ obtained as completion of $C_{0}^{\infty}(D)$ with respect to the norm
$$
\|f\|_{H_{0}^{1, \gamma}(D)}=\left(\int_{D}\left|\nabla_{\gamma} f\right|^{2} d x d y\right)^{\frac{1}{2}}.
$$

One notable example of nonlinear parabolic equations is the porous medium equation. It describes a variety of phenomena, encompassing fluid flow, heat transfer, and diffusion processes. Furthermore, its applications span across various fields, including mathematical biology, lubrication, boundary layer theory, and etc. In the case $\gamma=0$, hence $\Delta_{\gamma}$ reduces to the usual Laplacian $\Delta$, for existence and nonexistence results for the problem \eqref{main_eqn_p>2} we refer to e.g.  \cite{Ball, Band-Brun, Chen-Fila-Guo, Ding-Hu, Deng-Levine, Gal-Vaz-97, Gr-Mu-Po-13, Hayakawa, Ia-San-14, Ia-San-19, Levine90, LP1, ST-21,  Sam-Gal-Ku-Mik, Souplet} and to \cite{Gr-Mu-Pu-1, Gr-Mu-Pu-2} for the fractional porous medium equation as well as to \cite{Gr-Mu-Pu-3} on Cartan-Hadamard manifolds. Applying the concavity method, for blow-up and global existence of the solution to the problem \eqref{main_eqn_p>2} Schaefer \cite{Sch09} established a condition on the initial data when $\gamma=0$ and $f(u)=Cu^{p}$ for some positive constant $C$ and $p>\ell\geq 1$ on a smoothly bounded domain in $\mathbb{R}^{n}$. Recently, in \cite{ST-21} the authors studied similar questions for the double nonlinear porous medium equation. For a more comprehensive mathematical study of the porous medium equation we refer to Vazquez's book \cite{Vaz}.

In this note, we are interested in extensions of such results from the Laplacian to the Baouendi-Grushin operator. In the setting of hypoelliptic operators, the authors in \cite{PohVer} investigated blow-up of the solutions to the following semi-linear diffusion equation on the Heisenberg group $\mathbb{H}^{n}$
	$$
	\frac{\partial u(x,t)}{\partial t}-\mathcal{L} u(x,t)=|u(x,t)|^{p},\,\,\,\,\,(x,t)\in \mathbb{H}^{n} \times(0,+\infty),
	$$
 where $\mathcal{L}$ is the sub-Laplacian on $\mathbb{H}^{n}$. In this setting, we also refer to \cite{AAK1, AAK2, DL1, JKS1, JKS2} for blow-up type results for the semi-linear diffusion and pseudo-parabolic equations. Recently, in \cite{RST23_strat} the authors obtained global existence and blow-up type results for the problem \eqref{main_eqn_p>2} but with a sub-Laplacian on stratified Lie groups. We also refer to \cite{RY} where the authors found the Fujita exponent on general unimodular Lie groups. 

 One of the main difference of the Baouendi-Grushin operator than above sub-Laplacians on stratified or unimodular Lie groups is the Baouendi-Grushin operator cannot be represented by a sum of squares of smooth vector fields satisfying the H\"{o}rmander condition when $\gamma$ is not even integer. Moreover, unlike the setting of the Heisenberg group, the Baouendi-Grushin operator can be reduced to the usual Euclidean Laplacian when $\gamma=0$ thus implying the corresponding results for the classical Laplacian. 

 Before stating our main results, let us recall the following Poincar\'e inequality for Baouendi-Grushin vector fields from \cite{SY20}: 
		\begin{lem}\label{lem1}
			 			 Let $D \subset \mathbb{R}^{m+k}$ be a bounded domain (open and connected) supporting the divergence formula and $D \backslash \{(x, y) \in \overline{D}: x=0\}$ consists of only one connected component. For every function $u \in H_{0}^{1, \gamma}(D)$ we have 
			\begin{equation}
			\int_{D} |\nabla_{\gamma} u|^2 dxdy \geq \lambda_{1} \int_{D} |u|^2 dxdy,
			\end{equation}
			where $\lambda_{1}$ is the first eigenvalue of the Dirichlet Baouendi-Grushin operator on $D$.
				\end{lem}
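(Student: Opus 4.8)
The plan is to obtain the inequality as the variational characterization of the first Dirichlet eigenvalue. First I would introduce the Rayleigh quotient associated with the Baouendi-Grushin energy,
$$\mathcal{R}(u) := \frac{\int_{D} |\nabla_{\gamma} u|^2 \, dxdy}{\int_{D} |u|^2 \, dxdy}, \qquad u \in H_{0}^{1,\gamma}(D) \setminus \{0\},$$
and set $\mu := \inf_{u} \mathcal{R}(u)$. Since $\lambda_{1}$ denotes the first eigenvalue of the Dirichlet Baouendi-Grushin operator on $D$, the heart of the matter is to show that this infimum is attained and equals $\lambda_{1}$. Once this is known, the asserted inequality $\int_{D} |\nabla_{\gamma} u|^2 \, dxdy \geq \lambda_{1} \int_{D} |u|^2 \, dxdy$ follows for every nonzero $u$ by clearing the denominator in $\mathcal{R}(u) \geq \mu = \lambda_{1}$, and it holds trivially for $u \equiv 0$.

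To realize $\mu$ as a genuine eigenvalue I would run the direct method of the calculus of variations. Take a minimizing sequence $(u_{j}) \subset H_{0}^{1,\gamma}(D)$ normalized so that $\int_{D} |u_{j}|^2 \, dxdy = 1$ and $\int_{D} |\nabla_{\gamma} u_{j}|^2 \, dxdy \to \mu$. The normalization makes $(u_{j})$ bounded in $H_{0}^{1,\gamma}(D)$, so after passing to a subsequence it converges weakly in $H_{0}^{1,\gamma}(D)$ to some limit $u_{*}$. The crucial ingredient is that the embedding $H_{0}^{1,\gamma}(D) \hookrightarrow L^2(D)$ is compact, which upgrades weak convergence to strong $L^2$-convergence and hence preserves the normalization $\int_{D} |u_{*}|^2 \, dxdy = 1$; in particular $u_{*} \neq 0$. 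By the weak lower semicontinuity of the Dirichlet energy $u \mapsto \int_{D} |\nabla_{\gamma} u|^2 \, dxdy$ one then gets $\int_{D} |\nabla_{\gamma} u_{*}|^2 \, dxdy \leq \liminf_{j} \int_{D} |\nabla_{\gamma} u_{j}|^2 \, dxdy = \mu$, so that $u_{*}$ is a minimizer with $\mathcal{R}(u_{*}) = \mu$.

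Finally, computing the first variation of $\mathcal{R}$ at the minimizer $u_{*}$ shows that $u_{*}$ solves the Euler-Lagrange equation $-\Delta_{\gamma} u_{*} = \mu\, u_{*}$ weakly with zero boundary data, i.e. $u_{*}$ is a Dirichlet eigenfunction with eigenvalue $\mu$. Because $\mathcal{R} \geq 0$ and $\mathcal{R}(u_{*}) = \mu$ is the smallest value of the quotient, no eigenvalue can lie below $\mu$, so $\mu = \lambda_{1}$ is indeed the first eigenvalue, completing the identification and hence the inequality.

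I expect the main obstacle to be the compact embedding $H_{0}^{1,\gamma}(D) \hookrightarrow L^2(D)$. In the Euclidean case this is the classical Rellich-Kondrachov theorem, but here the vector fields $Y_{j} = |x|^{\gamma}\partial_{y_{j}}$ degenerate on the characteristic set $\{x = 0\}$, so the standard compactness arguments do not transfer verbatim. This is precisely where the hypotheses on $D$ enter: the facts that $D$ supports the divergence formula and that $D \setminus \{(x,y)\in\overline{D}: x=0\}$ is a single connected component are exactly the conditions under which the degenerate Sobolev space still embeds compactly into $L^2(D)$, guaranteeing a discrete spectrum and the existence of $\lambda_{1}$. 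For this degenerate Rellich-Kondrachov estimate I would appeal to the framework of \cite{SY20}.
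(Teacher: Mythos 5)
You should first note that the paper contains no proof of this lemma at all: it is quoted, hypotheses and all, from \cite{SY20}, so the only proof to compare against is the one in that reference. Your variational argument (Rayleigh quotient, direct method, Euler--Lagrange identification of the minimizer) is the textbook route and is sound in outline, but it is genuinely different from what \cite{SY20} does. There the inequality is obtained as a corollary of a sharp remainder identity: with $\phi_{1}$ the first Dirichlet eigenfunction, of one sign off the degenerate set, and with the substitution $u=\phi_{1}v$, an integration by parts (this is where the assumption that $D$ supports the divergence formula is used) gives an identity of the form
\begin{equation*}
\int_{D}|\nabla_{\gamma}u|^{2}\,dxdy-\lambda_{1}\int_{D}|u|^{2}\,dxdy
=\int_{D}\phi_{1}^{2}\,\bigl|\nabla_{\gamma}\bigl(u/\phi_{1}\bigr)\bigr|^{2}\,dxdy\;\geq\;0,
\end{equation*}
which proves the lemma with an explicit nonnegative remainder (whence ``sharp remainder'' in the title of \cite{SY20}). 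That argument takes the spectral theory — existence of $\lambda_{1}$ and of a ground state $\phi_{1}$ — as input; yours instead reconstructs $\lambda_{1}$ variationally, at the price of needing a compactness theorem.

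Two points in your write-up need correction. First, the compact embedding $H_{0}^{1,\gamma}(D)\hookrightarrow L^{2}(D)$, which you rightly single out as the crux, cannot be ``appealed to \cite{SY20}'': that paper establishes the remainder formula, not an embedding theorem, so as written your key step rests on a citation that does not contain it. The appropriate sources are the spectral-theoretic references the paper itself lists in the remark immediately after the lemma (\cite{AHKP08}, \cite{MSV15}, \cite{MP09}). Second, your reading of the hypotheses on $D$ is off. They are not ``exactly the conditions'' for compactness: the divergence-formula assumption licenses the integration by parts above, and the requirement that $D\setminus\{(x,y)\in\overline{D}:x=0\}$ consist of a single connected component guarantees that $\phi_{1}$ has one sign on that set — $\Delta_{\gamma}$ is locally uniformly elliptic away from $\{x=0\}$ but degenerates on it, so with two components $\phi_{1}$ could change sign between them and the substitution $u=\phi_{1}v$ would break down. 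Tellingly, under your approach these hypotheses would be essentially superfluous (a bounded domain suffices for the variational argument once compactness is known), which is itself a sign that they were tailored to a different proof than the one you propose.
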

				\begin{rem}
				    For more details on spectral properties of the Dirichlet Baouendi-Grushin operator we refer to e.g. \cite{AHKP08},  \cite{MSV15} and \cite{MP09}.
				\end{rem}
    	
Further, we always assume that $D$ is as in Lemma \ref{lem1}.

   Thus, our first result on the blow-up property is as follows:
\begin{thm}\label{thm_p>2}
				Assume that 
		\begin{equation}\label{condt_p}
		\alpha F(u) \leq u^{\ell} f(u) + \beta u^{2\ell} +\alpha\theta,\,\,\, u>0,
		\end{equation}
		where 
		\begin{equation*}
		F(u)=\frac{2\ell}{\ell+1}\int_{0}^{u}s^{\ell-1}f(s)ds, \,\,\, \ell\geq 1,
		\end{equation*}			
		for some
		\begin{equation*}
		\theta >0,\,\, 0<\beta\leq \lambda_{1}\frac{(\alpha - \ell -1)}{\ell+1}\,\,\, \text{ and } \,\, \alpha>\ell+1.
		\end{equation*} 
				Let the initial data $u_0 \in L^{\infty}(D)\cap H_{0}^{1, \gamma}(D)$ satisfy the inequality 	 \begin{equation}\label{J(1)}
		J_0:=   - \frac{1}{\ell+1} \int_{D} |\nabla_{\gamma} u^\ell_0(x,y)|^2 dxdy +  \int_{D} (F(u_0(x,y))-\theta) dxdy >0.
		\end{equation} 
		Then any positive solution $u$ of the problem \eqref{main_eqn_p>2} blows up in finite time $T^*,$ that is, there exists 
		\begin{equation}\label{T}
		0<T^*\leq \frac{M}{\sigma \int_{D}u_0^{\ell+1}(x,y)dxdy},
		\end{equation}
		such that 
		\begin{equation}
		\lim_{t\rightarrow T^*} \int_{0}^t \int_{D} u^{\ell+1} (x,y,\tau) dxdy d\tau = +\infty,
		\end{equation}
		where $M>0$ and $\sigma = \frac{\sqrt{2\ell\alpha}}{\ell+1}-1>0$.
		In fact, we can take 
		\begin{equation*}
		M = \frac{(1+\sigma)(1+1/\sigma)(\int_D u_0^{\ell+1}(x,y)dxdy)^2}{\alpha (\ell+1)J_0}.
		\end{equation*}
	\end{thm}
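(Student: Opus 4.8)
The plan is to run Levine's concavity method driven by a monotone energy functional. I would introduce the energy
\[
J(t) = -\frac{1}{\ell+1}\int_D |\nabla_\gamma u^\ell|^2\,dxdy + \int_D \big(F(u)-\theta\big)\,dxdy,
\]
so that $J(0)=J_0>0$ by hypothesis \eqref{J(1)}, together with the auxiliary functional
\[
A(t) = \int_0^t\int_D u^{\ell+1}(x,y,\tau)\,dxdy\,d\tau + M .
\]
First I would show that $J$ is nondecreasing. Differentiating $J$ in $t$, using the equation in \eqref{main_eqn_p>2} to replace $\Delta_\gamma(u^\ell)=u_t-f(u)$, and integrating by parts via the divergence formula (the boundary term vanishes since $u^\ell=0$ on $\partial D$), the two contributions involving $f$ should cancel and leave
\[
J'(t) = \frac{2\ell}{\ell+1}\int_D u^{\ell-1}u_t^2\,dxdy \ge 0 .
\]
Hence $J(t)\ge J_0>0$ throughout the lifespan; this monotonicity is what lets the smallness condition on the initial data persist in time.

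Next I would differentiate $A$. Since $A'(t)=\int_D u^{\ell+1}\,dxdy$, multiplying \eqref{main_eqn_p>2} by $u^\ell$ and integrating by parts gives
\[
A''(t) = (\ell+1)\Big[-\int_D |\nabla_\gamma u^\ell|^2\,dxdy + \int_D u^\ell f(u)\,dxdy\Big].
\]
Here the structural hypothesis \eqref{condt_p} enters: it bounds $u^\ell f(u)$ below by $\alpha(F(u)-\theta)-\beta u^{2\ell}$. Rewriting $\int_D(F(u)-\theta)$ in terms of $J(t)$ and the gradient term, and then applying the Poincaré inequality of Lemma \ref{lem1} to $u^\ell\in H_0^{1,\gamma}(D)$ to dominate $\beta\int_D u^{2\ell}$ by the gradient term, I expect to reach
\[
A''(t)\ \ge\ \alpha(\ell+1)J(t)\ \ge\ \alpha(\ell+1)J_0 \ >\ 0 .
\]
The constraint $\beta\le\lambda_1(\alpha-\ell-1)/(\ell+1)$ is precisely what makes the resulting coefficient $(\alpha-\ell-1)\lambda_1-(\ell+1)\beta$ of $\int_D u^{2\ell}$ nonnegative, so this step is a clean algebraic verification once $J'(t)\ge0$ is in hand.

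The core of the proof is upgrading this to the concavity inequality $A A''\ge(1+\sigma)(A')^2$. Writing $A'(t)-A'(0)=(\ell+1)\int_0^t\int_D u^\ell u_\tau$ and applying the Cauchy--Schwarz inequality in $(x,y,\tau)$, together with the identity $\int_0^t\int_D u^{\ell-1}u_\tau^2=\frac{\ell+1}{2\ell}(J(t)-J_0)$ from the first step, the bound $J(t)\le A''(t)/(\alpha(\ell+1))$ from the second, and the observation that $A(t)-M=\int_0^t\int_D u^{\ell+1}$, I would obtain (using $(1+\sigma)^2=2\ell\alpha/(\ell+1)^2$)
\[
\big(A'(t)-A'(0)\big)^2\ \le\ \frac{1}{(1+\sigma)^2}\,(A(t)-M)\,A''(t).
\]
Then Young's inequality $(A')^2\le(1+\sigma)(A'-A'(0))^2+(1+1/\sigma)(A'(0))^2$ with $A'(0)=\int_D u_0^{\ell+1}$ produces the leftover term $(1+\sigma)(1+1/\sigma)(A'(0))^2$, and the stated choice of $M$ is exactly what makes this term $\le M A''(t)$ (via $A''\ge\alpha(\ell+1)J_0$). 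Absorbing it into $MA''$ closes the inequality to $A A''\ge(1+\sigma)(A')^2$. I expect this constant bookkeeping --- in particular keeping $A-M$ rather than $A$ inside Cauchy--Schwarz and matching $M$ to the Young remainder --- to be the main obstacle; the other genuine subtlety is justifying the differentiations under the integral and the integrations by parts, which presume enough regularity of the positive solution and $u^\ell\in H_0^{1,\gamma}(D)$.

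Finally, $A A''\ge(1+\sigma)(A')^2$ with $A>0$ and $\sigma>0$ is equivalent to $(A^{-\sigma})''\le0$, so $A^{-\sigma}$ is positive and concave with initial slope $(A^{-\sigma})'(0)=-\sigma M^{-\sigma-1}\int_D u_0^{\ell+1}<0$, the strict negativity coming from $u_0\ge0$ nontrivial. A concave positive function with negative slope must vanish in finite time, no later than where its initial tangent line hits zero, namely at $T^*\le A(0)/(\sigma A'(0))=M/(\sigma\int_D u_0^{\ell+1})$, giving \eqref{T}; and $A(t)\to\infty$ as $t\to T^*$ is exactly the asserted blow-up of $\int_0^t\int_D u^{\ell+1}$.
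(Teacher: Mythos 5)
Your proposal is correct and follows essentially the same concavity argument as the paper: the same functionals $A(t)$ (the paper's $E(t)$) and $J(t)$, the same monotonicity identity $J'(t)=\frac{2\ell}{\ell+1}\int_D u^{\ell-1}u_t^2\,dxdy$ derived from the equation, the same use of \eqref{condt_p} and Lemma \ref{lem1} to get $A''(t)\ge\alpha(\ell+1)J(t)$, and the same constants $\sigma$ and $M$. The only difference is bookkeeping: you reach $AA''\ge(1+\sigma)(A')^2$ through the intermediate bound $\bigl(A'(t)-A'(0)\bigr)^2\le(1+\sigma)^{-2}\bigl(A(t)-M\bigr)A''(t)$, whereas the paper expands $E''E-(1+\sigma)(E')^2$ directly and cancels the cross terms by choosing $\delta=\sigma$.
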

	\begin{rem} Recall that the condition on $f(u)$ was introduced by Chung and Choi \cite{Chung-Choi} for a parabolic equation with the classical Laplacian. We also refer to \cite{PhP-06} and \cite{Band-Brun} for special cases of this condition. For recent papers on such conditions, we can refer to \cite{RST23_strat}, \cite{ST-21}  and \cite{Bolys_wave}.
	\end{rem}
	\begin{rem}
	In particular, Theorem \ref{thm_p>2} for $\ell=1$ was obtained in \cite{SY20}. 
	\end{rem}
 The next result shows that under some assumptions, if a positive solution to \eqref{main_eqn_p>2} exists, its norm is globally controlled.  
	\begin{thm}\label{thm_GEp}
				Assume that
		\begin{equation}\label{global_cond-p}
		\alpha F(u) \geq u^{\ell} f(u) + \beta u^{2\ell} +\alpha\theta, \,\,\, u>0,
		\end{equation}
		where
		\begin{equation*}
		F(u)=\frac{2\ell}{\ell+1}\int_{0}^{u}s^{\ell-1}f(s)ds, \,\,\, \ell\geq 1,
		\end{equation*}
		for some
		\begin{equation*}
		\theta \geq 0, \,\, \alpha \leq 0 \,\,\, \text{ and } \,\,\, \beta \geq \lambda_{1}\frac{( \alpha-\ell-1 )}{\ell+1}.
		\end{equation*}
		
		Assume also that the initial data $u_0 \in L^{\infty}(D)\cap H_{0}^{1, \gamma}(D)$ satisfies the inequality 
		\begin{equation}\label{J(0)}
		J_0:= \int_{D} (F(u_0(x,y))-\theta) dxdy - \frac{1}{\ell+1} \int_{D} |\nabla_{\gamma} u^\ell_0(x,y)|^2 dxdy>0.
		\end{equation}
		If $u$ is a positive local solution of the problem \eqref{main_eqn_p>2}, then it is global with the property
		\begin{equation*}
		\int_D u^{\ell+1}(x,y,t) dxdy \leq  \int_D u^{\ell+1}_0(x,y)dxdy.
		\end{equation*}
		
	\end{thm}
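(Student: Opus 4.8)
The plan is to prove that the functional $E(t):=\int_D u^{\ell+1}(x,y,t)\,dxdy$ is non-increasing in time: this simultaneously gives the asserted a priori bound and, via the standard continuation criterion, forces any local solution to extend to a global one since a controlled $L^{\ell+1}$-norm precludes finite-time blow-up. Two functionals drive the argument: $E(t)$ itself and the natural energy
\begin{equation*}
J(t):=\int_D (F(u(x,y,t))-\theta)\,dxdy-\frac{1}{\ell+1}\int_D |\nabla_\gamma u^\ell(x,y,t)|^2\,dxdy,
\end{equation*}
which coincides with $J_0$ at $t=0$. First I would differentiate $E$ and use the equation \eqref{main_eqn_p>2} together with the divergence formula, the boundary terms vanishing since $u=0$, hence $u^\ell=0$, on $\partial D$, to obtain
\begin{equation*}
E'(t)=(\ell+1)\left[-\int_D |\nabla_\gamma u^\ell|^2\,dxdy+\int_D u^\ell f(u)\,dxdy\right].
\end{equation*}

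The crucial monotonicity step is to show $J'(t)\ge 0$. Differentiating $J$ and using $F'(u)=\frac{2\ell}{\ell+1}u^{\ell-1}f(u)$, then integrating the gradient term by parts in space and recognizing $\Delta_\gamma(u^\ell)+f(u)=u_t$ from the equation, everything should collapse to
\begin{equation*}
J'(t)=\frac{2\ell}{\ell+1}\int_D u^{\ell-1}(u_t)^2\,dxdy\ge 0,
\end{equation*}
where non-negativity uses $u>0$ and $\ell\ge 1$. Hence $J(t)\ge J_0>0$ for all $t$ in the lifespan of the solution.

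It remains to combine these estimates. I would insert the hypothesis \eqref{global_cond-p} in the form $u^\ell f(u)\le \alpha(F(u)-\theta)-\beta u^{2\ell}$ into the expression for $E'(t)$, then eliminate the potential term via $\int_D(F(u)-\theta)\,dxdy=J(t)+\frac{1}{\ell+1}\int_D|\nabla_\gamma u^\ell|^2\,dxdy$, arriving at
\begin{equation*}
E'(t)\le (\ell+1)\alpha J(t)+(\alpha-\ell-1)\int_D|\nabla_\gamma u^\ell|^2\,dxdy-(\ell+1)\beta\int_D u^{2\ell}\,dxdy.
\end{equation*}
Here the sign conditions are decisive. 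Since $\alpha\le 0$ forces $\alpha-\ell-1<0$, the Poincaré inequality of Lemma \ref{lem1} applied to $u^\ell$, namely $\int_D|\nabla_\gamma u^\ell|^2\,dxdy\ge \lambda_1\int_D u^{2\ell}\,dxdy$, reverses direction once multiplied by the negative coefficient, giving $(\alpha-\ell-1)\int_D|\nabla_\gamma u^\ell|^2\,dxdy\le(\alpha-\ell-1)\lambda_1\int_D u^{2\ell}\,dxdy$. The constraint $\beta\ge\lambda_1\frac{\alpha-\ell-1}{\ell+1}$ then renders the coefficient of $\int_D u^{2\ell}\,dxdy$ non-positive, so $E'(t)\le(\ell+1)\alpha J(t)\le 0$, again using $\alpha\le 0$ and $J(t)>0$. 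Integrating yields $E(t)\le E(0)$, which is the claim, and globalness follows from the continuation argument.

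The main obstacle I anticipate is the rigorous justification of the two differentiation-under-the-integral computations and of the spatial integration by parts, in particular that the local solution is regular enough for $J'(t)$ to be evaluated, that the integral $\int_D u^{\ell-1}(u_t)^2\,dxdy$ is meaningful, and that the boundary contributions genuinely vanish. The remaining sign-chasing, while delicate in its bookkeeping, is mechanical once the correct (reversed) direction of the Poincaré estimate is respected.
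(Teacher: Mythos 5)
Your proposal is correct and follows essentially the same route as the paper: the same functional $\int_D u^{\ell+1}\,dxdy$, the same auxiliary energy $J(t)$ with the identity $J'(t)=\frac{2\ell}{\ell+1}\int_D u^{\ell-1}u_t^2\,dxdy\ge 0$ (the paper's relation \eqref{Fp}), the same decomposition of $E'(t)$ isolating $\alpha(\ell+1)J(t)$, and the same sign analysis combining $\alpha\le 0$, $J(t)\ge J_0>0$, the Poincar\'e inequality of Lemma \ref{lem1} applied to $u^\ell$, and the constraint on $\beta$. The only cosmetic difference is that you phrase the last step as $J(t)\ge J_0>0$ while the paper writes $\alpha(\ell+1)J(t)=\alpha(\ell+1)J(0)+2\alpha\ell\int_0^t\int_D u^{\ell-1}u_\tau^2\,dxdy\,d\tau\le 0$, which is the same argument.
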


	\section{Proofs}\label{sec1}

	\begin{proof}[Proof of Theorem \ref{thm_p>2}]
		Assume that $u(x,y,t)$ is a positive solution to \eqref{main_eqn_p>2}. Let us define
				\begin{equation*}
		E(t): = \int_{0}^t \int_{D} u^{\ell+1}(x,y, \tau) dxdy d\tau + M, \,\, t\geq 0,
		\end{equation*}
		with $M>0$ to be chosen later. Then, for $E(t)$ one can observe that
		\begin{multline*}
		E'(t)=\int_{D} u^{\ell+1}(x,y,t) dxdy \\= (\ell+1)\int_{D} \int_{0}^t u^{\ell}(x,y,\tau) u_{\tau}(x,y,\tau) d\tau dxdy + \int_{D} u^{\ell+1}_0(x,y) dxdy.
		\end{multline*}
Applying the H\"older and Cauchy-Schwarz inequalities, we note that
		\begin{align*}
		(E'(t))^2&\leq  (1+\delta)\left( \int_{D} \int_{0}^t (u^{\ell+1}(x,y,\tau))_{\tau} d\tau dxdy  \right)^2 + \left( 1+ \frac{1}{\delta}\right)\left( \int_{D} u_0^{\ell+1}(x,y)dxdy \right)^2 \\
		& = (\ell+1)^2(1+\delta)\left( \int_{D} \int_{0}^t u^{\ell}(x,y,\tau)  u_{\tau}(x,y,\tau)dxdy d\tau \right)^2   \\&+ \left( 1+ \frac{1}{\delta}\right)\left( \int_{D} u_0^{\ell+1}(x,y)dxdy \right)^2
		\end{align*}
		\begin{align}\label{eq-E2}
&= (\ell+1)^2(1+\delta)\left( \int_{D} \int_{0}^t u^{(\ell+1)/2 + (\ell-1)/2}(x,y,\tau)  u_{\tau}(x,y,\tau)dxdy d\tau \right)^2   \nonumber\\
		&+ \left( 1+ \frac{1}{\delta}\right)\left( \int_{D} u_0^{\ell+1}(x,y)dxdy \right)^2
		\nonumber\\&\leq (\ell+1)^2(1+\delta)\left( \int_{D} \left(\int_{0}^t u^{\ell+1} d\tau\right)^{1/2}\left( \int_{0}^t u^{\ell-1} u_{\tau}^2(x,y,\tau)d\tau\right)^{1/2} dxdy \right)^2  \nonumber \\
		&+ \left( 1+ \frac{1}{\delta}\right)\left( \int_{D} u_0^{\ell+1}(x,y)dxdy \right)^2 \nonumber \\
		& \leq (\ell+1)^2(1+\delta) \left(\int_{0}^t \int_{D} u^{\ell+1} dxdy d\tau\right)\left( \int_{0}^t \int_{D} u^{\ell-1} u_{\tau}^2(x,y,\tau)dxdyd\tau \right)   \nonumber\\
		&+ \left( 1+ \frac{1}{\delta}\right)\left( \int_{D} u_0^{\ell+1}(x,y)dxdy \right)^2
		\end{align}
		for any $\delta>0$. 
  
		For the second derivative of $E(t)$, by \eqref{condt_p}, Lemma \ref{lem1} and $0<\beta\leq \lambda_{1}$ we have
		\begin{align*}
		E''(t) &=(\ell+1) \int_{D} u^{\ell}(x,y,t) u_t(x,y,t) dxdy\\
		& = -(\ell+1) \int_{D} |\nabla_{\gamma} u^\ell(x,y,t)|^2 dxdy + (\ell+1)\int_{D} u^{\ell}(x,y,t)f(u(x,t))dxdy
		\\
		& \geq (\ell+1) \int_{D} \left( \alpha F(u(x,y,t)) -\beta u^{2\ell} (x,y,t) -\alpha \theta \right) dxdy\\& - (\ell+1) \int_{D} |\nabla_{\gamma} u^\ell(x,y,t)|^2 dxdy\\
		& =\alpha (\ell+1) \left( -\frac{1}{\ell+1} \int_{D} |\nabla_{\gamma} u^\ell(x,y,t)|^2 dxdy + \int_{D} (F(u(x,y,t))-\theta) dxdy \right)  \\
		&+ (\alpha-\ell-1) \int_{D} |\nabla_{\gamma} u^\ell(x,y,t)|^2 dxdy -\beta (\ell+1) \int_{D} u^{2\ell}(x, y,t) dxdy
		\end{align*}
\begin{align*}
		& \geq \alpha(\ell+1) \left( -\frac{1}{\ell+1} \int_{D} |\nabla_{\gamma} u^\ell(x,y,t)|^2 dxdy + \int_{D} (F(u(x,y,t))-\theta) dxdy \right)  \\
		& + \left( \lambda_{1}(\alpha - \ell-1) - \beta (\ell+1) \right)\int_{D} u^{2\ell}(x,y,t) dxdy\\
		&  \geq \alpha (\ell+1)\left( -\frac{1}{\ell+1} \int_{D} |\nabla_{\gamma} u^\ell(x,y,t)|^2 dxdy + \int_{D} (F(u(x,y,t))-\theta) dxdy \right)\\
		& =: \alpha (\ell+1)J(t)
  \end{align*}
  $$ = \alpha (\ell+1) J(0) + 2\alpha \ell \int_{0}^t \int_{D} u^{\ell-1}(x,y,\tau) u_{\tau}^2(x,y,\tau) dxdy d\tau,$$
  where we have used the following relation in the last line 
		\begin{multline}\label{Fp}
		J(t) = J(0) + \int_{0}^t \frac{d J(\tau)}{d\tau}d\tau=
  J(0) \\- \frac{1}{\ell+1} \int_{0}^t\int_{D} \frac{d}{d\tau}|\nabla_{\gamma} u^\ell(x,y,\tau)|^2 dxdy d\tau  + \int_{0}^t \int_{D} \frac{d}{d\tau} (F(u(x,y,\tau))-\theta) dxdy d\tau\\=
   J(0) - \frac{2}{\ell+1} \int_{0}^t\int_{D}  \nabla_{\gamma} u^\ell \cdot \nabla_{\gamma} (u^\ell(x,y,\tau))_{\tau} dxdy d\tau\\
		 +  \int_{0}^t \int_{D} F_u(u(x,y,\tau)) u_{\tau}(x,y,\tau) dxdy d\tau\\
  =J(0)+\frac{2}{\ell+1} \int_{0}^t \int_{D} (\Delta_{\gamma} (u^\ell )+ f(u))(u^\ell(x,y,\tau))_{\tau} dxdy d\tau \\=
  J(0)+\frac{2\ell}{\ell+1}\int_{0}^t \int_{D} u^{\ell-1}(x,y,\tau) u_{\tau}^2(x,y,\tau) dxdy d\tau.
		\end{multline}
Note that for $J_{0}$ from \eqref{J(1)} we have $J(0)=J_{0}$. Since $\alpha > \ell+1$ implies $\sigma=\delta= \frac{\sqrt{2\ell\alpha}}{\ell+1}-1>0$ and combining the above estimates, we arrive at
		\begin{align*}
		&E''(t) E (t) - (1+\sigma) (E'(t))^2\\
		&\geq  \alpha M(\ell+1) \left( -\frac{1}{\ell+1} \int_{D} |\nabla_{\gamma} u^\ell_0|^2 dxdy + \int_{D} (F(u_0)-\theta)dxdy\right) 
		\\&+ 2\ell\alpha \left( \int_{0}^t\int_{D} u^{\ell+1}(x, y,\tau) dxdy d\tau \right) \left(\int_{0}^t \int_{D} u_{\tau}^2(x,\tau) u^{\ell-1}(x,y,\tau) dxdy d\tau  \right)\\
		&- (\ell+1)^2(1+\sigma) (1+\delta) \left(\int_{0}^t \int_{D} u^{\ell+1} dxdy d\tau\right)\left( \int_{0}^t \int_{D} u^{\ell-1} u_{\tau}^2(x,y,\tau)dxdyd\tau \right)\\
		& - (1+\sigma)\left( 1+ \frac{1}{\delta}\right)\left( \int_{D} u_0^{\ell+1}(x,y)dxdy \right)^2 \\
		&\geq  \alpha M(\ell+1) J(0) - (1+\sigma)\left( 1+ \frac{1}{\delta}\right)\left( \int_{D} u_0^{\ell+1}(x,y)dxdy \right)^2.
		\end{align*}

		Recall that by \eqref{J(1)} we have $J(0)>0$, thus we can choose 
		$$M =\frac{(1+\sigma)\left( 1+ \frac{1}{\delta}\right)\left( \int_{D} u_0^{\ell+1}(x,y)dxdy \right)^2}{\alpha (\ell+1) J(0)}$$ 
		so that
		\begin{equation}
		E''(t) E (t) - (1+\sigma) (E'(t))^2 \geq 0
		\end{equation}
  holds. On the other hand, it means for $t\geq 0$ that 
		\begin{equation*}
		\frac{d}{dt} \left( \frac{E'(t)}{E^{\sigma+1}(t)} \right) \geq 0  \Rightarrow 	\begin{cases}
		E'(t) \geq \left( \frac{E'(0)}{E^{\sigma+1}(0)} \right) E^{1+\sigma}(t),\\
		E(0)=M.
		\end{cases}
		\end{equation*}
		Taking into account $\sigma = \frac{\sqrt{2\ell\alpha}}{\ell+1}-1>0$, we obtain
		\begin{align*}
		- \frac{1}{\sigma} \left( E^{-\sigma}(t) - E^{-\sigma}(0)  \right) \geq \frac{E'(0)}{E^{\sigma+1}(0)} t,
		\end{align*}
  which implies together with $E(0)=M$ that
		\begin{equation*}
		E(t) \geq \left( \frac{1}{M^{\sigma}}-\frac{ \sigma \int_{D} u^{\ell+1}_0(x,y)dxdy }{M^{\sigma+1}} t\right)^{-\frac{1}{\sigma}}.
		\end{equation*}
		Thus, we have observed that the blow-up time $T^*$ satisfies 
		\begin{equation*}
		0<T^*\leq \frac{M}{\sigma \int_{D} u_0^{\ell+1}dxdy},
		\end{equation*}
		as desired.
	\end{proof}	
Let us now prove Theorem \ref{thm_GEp}.
	\begin{proof}[Proof of Theorem \ref{thm_GEp}] Here, we work with the functional
			
		\begin{equation*}
		\mathcal E(t) = \int_{D} u^{\ell+1}(x,y,t)dxdy.
		\end{equation*}
		By using \eqref{global_cond-p}, Lemma \ref{lem1} and $\beta \geq \lambda_{1}\frac{( \alpha-\ell-1 )}{\ell+1}$, we have 
		\begin{align*}
		&\mathcal	E'(t) =(\ell+1) \int_{D} u^{\ell}(x,y,t) u_t(x,t) dxdy\\
		& = (\ell+1) \left(\int_{D} u^{\ell}(x,y,t) \nabla_{\gamma} \cdot (\nabla_{\gamma} u^\ell(x,y,t)) + \int_{D} u^{\ell}(x,y,t)f(u(x,y, t))dxdy\right) \\
		& = (\ell+1) \left(-\int_{D} |\nabla_{\gamma} u^\ell(x,y,t)|^2 dxdy + \int_{D} u^{\ell}(x,y,t)f(u(x,y,t))dxdy\right)
		\\
		& \leq (\ell+1) \left(-\int_{D} |\nabla_{\gamma} u^\ell(x,y,t)|^2 dxdy +  \int_{D} \left( \alpha F(u(x,y,t)) -\beta u^{2\ell} (x,y,t) -\alpha \theta \right) dxdy\right)\\
		& =\alpha (\ell+1) \left( -\frac{1}{\ell+1} \int_{D} |\nabla_{\gamma} u^\ell(x,y,t)|^2 dxdy + \int_{D} (F(u(x,y,t))-\theta) dxdy \right)  \\
		&- (\ell+1-\alpha) \int_{D} |\nabla_{\gamma} u^\ell(x,y,t)|^2 dxdy -\beta (\ell+1) \int_{D} u^{2\ell}(x,y, t) dxdy\\
		& \leq \alpha (\ell+1)\left( -\frac{1}{\ell+1} \int_{D} |\nabla_{\gamma} u^\ell(x,y,t)|^2 dxdy + \int_{D} (F(u(x,y,t))-\theta) dxdy \right)  \\
		& - \left( \lambda_{1}(\ell+1-\alpha ) + \beta (\ell+1) \right)\int_{D} u^{2\ell}(x, y,t) dxdy\\
		&  \leq \alpha (\ell+1)\left( -\frac{1}{\ell+1} \int_{D} |\nabla_{\gamma} u^\ell(x,y,t)|^2 dxdy + \int_{D} (F(u(x,y,t))-\theta) dxdy \right) \\
		&=\alpha (\ell+1)J(t),
		\end{align*}
  where we have used the same functional $J(t)$ as in the proof of Theorem \ref{thm_p>2}. \eqref{Fp}
  Note that taking into account \eqref{Fp} and $\alpha \leq 0$ in the estimate for $\mathcal E'(t)$ above one can get
		\begin{align}
		\mathcal E'(t) \leq \alpha (\ell+1) J(0) + 2\alpha \ell \int_{0}^t \int_{D} u^{\ell-1}(x,y,\tau) u_{\tau}^2(x,y,\tau) dxdy d\tau \leq 0,
		\end{align}
		which yields
		\begin{equation*}
		\mathcal E(t) \leq \mathcal E(0),
		\end{equation*}
		as desired.
	\end{proof}


\begin{thebibliography}{NZW01}

\bibitem{AAK1} B. Ahmad, A. Alsaedi, M. Kirane, M. Al-Yami.
\newblock {\em Nonexistence results for higher order pseudo-parabolic equations in the Heisenberg group.}
\newblock Math. Methods Appl. Sci., 40:1280--1287, 2017.

\bibitem{AAK2} B. Ahmad, A. Alsaedi, M. Kirane. 
\newblock {\em Blow-up of solutions to parabolic inequalities in the Heisenberg group.}
\newblock Electron. J. Differential Equations, 2015: Art. No. 167, 2015.

\bibitem{AHKP08}
C.T. Anh, P.Q. Hung, T.D. Ke, T.T. Phong. 
\newblock {\em Global attractor for a semilinear
parabolic equation involving Grushin operator.}
\newblock Electron. J. Differential Equations, 2008: Art. No. 32, 2008.

\bibitem{Ball}
		J.M. Ball. 
\newblock {\em Remarks on blow-up and nonexistence theorems for nonlinear evolution equations.}
\newblock Quart. J. Math., 28:473--486, 1977.

\bibitem{Band-Brun}
		C. Bandle, H. Brunner. 
\newblock {\em Blow-up in diffusion equations, a survey.}
\newblock J. Comput. Appl. Math., 97:3--22, 1998.

		\bibitem{Baren} G.I. Barenblatt, J. Garcia-Azorero, A. De Pablo, J.L. Vazquez. 
\newblock {\em Mathematical model of the non-equilibrium water-oil displacement in porous strata.}
\newblock Appl. Anal., 65:19--45, 1997.

\bibitem{Chung-Choi}
		S.-Y. Chung, M.-J. Choi.
\newblock {\em A new condition for the concavity method of blow-up solutions to $p$-Laplacian parabolic equations.}
\newblock J. Differential Equations, 265:6384--6399, 2018.

\bibitem{Chen-Fila-Guo} 
		X. Chen, M. Fila, J.S. Guo.
\newblock {\em Boundedness of global solutions of a supercritical parabolic equation.}
\newblock Nonlinear Anal., 68:621--628, 2008.

\bibitem{Ding-Hu}
		J. Ding, H. Hu.
\newblock {\em Blow-up and global solutions for a class of nonlinear reaction diffusion equations under Dirichlet boundary conditions.}
\newblock J. Math. Anal. Appl., 433:1718--1735, 2016.

\bibitem{Deng-Levine}
		K. Deng, H.A. Levine.
\newblock {\em The role of critical exponents in blow-up theorems: The sequel.}
\newblock J. Math. Anal. Appl., 243:85--126, 2000.

\bibitem{DL1} L. D'Ambrosio.
\newblock {\em Critical degenerate inequalities on the Heisenberg group.}
\newblock Manuscripta Math., 106:519--536, 2001.

\bibitem{Gal-Vaz-97}
		V.A. Galaktionov, J.L. V\'azquez.
\newblock {\em Continuation of blowup solutions of nonlinear heat equations in several dimensions.}
\newblock Comm. Pure Appl. Math., 50:1--67, 1997.

\bibitem{Gr-Mu-Po-13}
		G. Grillo, M. Muratori, M. Porzio.
\newblock {\em Porous media equations with two weights: Existence, uniqueness, smoothing and decay properties of energy solutions via Poincar\'e inequalities.}
\newblock Discrete Contin. Dyn. Syst., 33:3599--3640, 2013.

\bibitem{Gr-Mu-Pu-1}
		G. Grillo, M. Muratori, F. Punzo.
\newblock {\em Fractional porous media equations: Existence and uniqueness of weak solutions with measure data.}
\newblock Calc. Var. Partial Differential Equations, 54:3303--3335, 2015.


\bibitem{Gr-Mu-Pu-2}
		G. Grillo, M. Muratori, F. Punzo.
\newblock {\em On the asymptotic behaviour of solutions to the fractional porous medium equation with variable density.}
\newblock Discrete Contin. Dyn. Syst., 35:5927--5962, 2015.

\bibitem{Gr-Mu-Pu-3}
		G. Grillo, M. Muratori, F. Punzo.
\newblock {\em Blow-up and global existence for the porous medium equation with reaction on a class of Cartan–Hadamard manifolds.}
\newblock J. Differential Equations, 266:4305--4336, 2019.

\bibitem{Hayakawa}
		K. Hayakawa.
\newblock {\em On nonexistence of global solutions of some semilinear parabolic differential equations.}
\newblock Proc. Japan Acad., 49:503--505, 1973.


\bibitem{Ia-San-14}
		R.G. Iagar, A. Sanchez.
\newblock {\em Large time behavior for a porous medium equation in a nonhomogeneous medium with critical density.}
\newblock Nonlinear Anal., 102:226--241, 2014.

\bibitem{Ia-San-19}
		R.G. Iagar, A. Sanchez.
\newblock {\em Blow up profiles for a quasilinear reaction–diffusion equation with weighted reaction with linear growth.}
\newblock J. Dynam. Differential Equations, 31:2061--2094, 2019.

\bibitem{JKS1} M. Jleli, M. Kirane, B. Samet.
\newblock {\em Nonexistence results for a class of evolution equations in the Heisenberg group.}
\newblock Fract. Calc. Appl., 18:717--734, 2015.

\bibitem{JKS2} M. Jleli, M. Kirane, B. Samet.
\newblock {\em Nonexistence results for pseudo-parabolic equations in the Heisenberg group.}
\newblock Monatsh. Math., 180:255--270, 2016.

\bibitem{Levine90}
		H.A. Levine. 
\newblock {\em The role of critical exponents in blow-up theorems.}
\newblock SIAM Rev., 32:262--288, 1990.

\bibitem{LP1}
		H.A. Levine, L.E. Payne. 
\newblock {\em Nonexistence theorems for the heat equation with nonlinear boundary conditions and for the porous medium equation backward in time.}
\newblock J. Differential Equations, 16:319--334, 1974.





\bibitem{MP09}
D.D. Monticelli, K.R. Payne.
\newblock {\em Maximum principles for weak solutions of degenerate elliptic equations with a uniformly elliptic direction.}
\newblock J. Differential Equations, 247:1993--2026, 2009.

\bibitem{MSV15}
M. Mihailescu, D. Stancu-Dumitru, C. Varga.
\newblock {\em On the spectrum of a Baouendi–Grushin type operator: an Orlicz–Sobolev space setting approach.}
\newblock Nonlinear Differ. Equ. Appl., 22:1067--1087, 2015.

\bibitem{PhP-06} 
		G.A. Philippin, V. Proytcheva. 
\newblock {\em Some remarks on the asymptotic behaviour of the solutions of a class of parabolic problems.}
\newblock Math. Methods Appl. Sci., 29:297--307, 2006.



\bibitem{RY}
		M. Ruzhansky, N. Yessirkegenov.
\newblock {\em Existence and non-existence of global solutions for semilinear heat equations and inequalities on sub-Riemannian manifolds, and Fujita exponent on unimodular Lie groups.}
\newblock J. Differential Equations, 308:455--473, 2022.

\bibitem{RST23_strat}
		 M. Ruzhansky, B. Sabitbek, B. Torebek. 
\newblock {\em Global existence and blow-up of solutions to porous medium equation and pseudo-parabolic equation, I. Stratified group.}
\newblock Manuscripta Math., 171:377--395, 2023.

\bibitem{ST-21}
		B. Sabitbek, B. Torebek.
\newblock {\em Global existence and blow-up of solutions to the double nonlinear porous medium equation.}
\newblock Discrete Continuous Dyn. Syst. Ser. B., 44:743--767, 2024.

\bibitem{Bolys_wave}
		B. Sabitbek.
\newblock {\em Global existence and nonexistence of solutions for semilinear wave equation with a new condition.}
\newblock Discrete Continuous Dyn. Syst. Ser. B., 43:2637--2657, 2023.


\bibitem{Sam-Gal-Ku-Mik}
		A.A. Samarskii, V.A. Galaktionov, S.P. Kurdyumov, A.P. Mikhailov.
\newblock {\em Blow-Up in Quasilinear Parabolic Equations.}
\newblock in: De Gruyter Expositions in Mathematics, vol. 19, Walter de Gruyter Co., Berlin, 1995.

\bibitem{Souplet}
		P. Souplet.
\newblock {\em Morrey spaces and classification of global solutions for a supercritical semilinear heat equation in $R^n$.}
\newblock J. Funct. Anal., 272:2005--2037, 2017.
	
\bibitem{Sch09}
		P.W. Schaefer. 
\newblock {\em Blow-up phenomena in some porous medium problems.}
\newblock Dyn. Sys. and Appl., 18:103--110, 2009.

\bibitem{Vaz}
		J.L. Vazquez. 
\newblock {\em The Porous Medium Equation: Mathematical Theory.}
\newblock Oxford University Press, 2006.

\bibitem{SY20}
		D. Suragan, N. Yessirkegenov. 
\newblock {\em Sharp remainder of the Poincar\'e inequality for Baouendi--Grushin vector fields.}
\newblock Asian-Eur. J. Math., 16: Art. No. 2350041, 2023.

\bibitem{PohVer} L. V\'{e}ron, S.I. Pohozaev.
\newblock {\em Nonexistence results of solutions of semilinear differential inequalities on the Heisenberg group.}
\newblock Manuscripta Math., 102:85--99, 2000.

	\end{thebibliography}
\end{document}